\let\nc\newcommand
\let\renc\renewcommand
\theoremstyle{plain}
\newtheorem{thm}{Theorem}
\newtheorem{prop}[thm]{Proposition}
\newtheorem{cor}[thm]{Corollary}
\newtheorem{lem}[thm]{Lemma}
\theoremstyle{definition}
\newtheorem{defn}[thm]{Definition}
\numberwithin{thm}{section}
\nc{\bdm}{\begin{displaymath}}
\nc{\edm}{\end{displaymath}}
\nc{\bthm}{\begin{thm}}
\nc{\ethm}{\end{thm}}
\nc{\blem}{\begin{lem}}
\nc{\elem}{\end{lem}}
\nc{\bcor}{\begin{cor}}
\nc{\ecor}{\end{cor}}
\nc{\bprop}{\begin{prop}}
\nc{\eprop}{\end{prop}}
\nc{\bdef}{\begin{defn}}
\nc{\eddef}{\end{defn}}
\renewcommand{\subsection}{\@startsection{subsection}{2}{0pt}{-3ex
plus -1ex minus -0.2ex}{-2mm plus -0pt minus
-2pt}{\normalfont\bfseries}} \makeatother
\numberwithin{equation}{section}
\DeclareMathOperator{\gr}{\mathrm{gr}}
\newcommand{\beq}{\begin{equation}\label}
\newcommand{\eeq}{\end{equation}}
\nc{\Z}{\mathbb{Z}}
\newcommand{\C}{\mathbb{C}}
\nc{\rank}{\textrm{rank} \,}
\nc{\ds}{\dots}
\let\mc\mathcal
\let\mf\mathfrak
\nc{\mbf}{\mathbf}
\nc{\Res}{\mathsf{Res} \, }
\nc{\Ind}{\mathsf{Ind} \, }
\nc{\cont}{\textrm{cont}}
\nc{\msf}{\mathsf}
\nc{\minusone}{-1}
\nc{\minustwo}{-2}
\nc{\Mod}{\mathrm{Mod} \,}
\nc{\ms}{\mathscr}
\nc{\Frac}{\mathrm{Frac} \,}
\nc{\ra}{\rightarrow}
\nc{\hra}{\hookrightarrow}
\nc{\lab}{\label}
\renc{\O}{\mc{O}}
\nc{\Tan}{\mc{T}}
\nc{\ul}{\underline}
\nc{\s}{\mathfrak{S}}
\nc{\g}{\mf{g}}
\nc{\pa}{\partial}
\nc{\tit}{\textit}
\nc{\Maxspec}{\mathrm{Maxspec} \, }
\nc{\gldim}{\mathrm{gl.dim}}
\nc{\rkm}{\mathrm{rk} \, (\mf{m})}
\nc{\sm}{\mathrm{sm}}
\nc{\PD}{\mathbb{PD}}
\nc{\hilb}{\textrm{Hilb}}
\nc{\T}{\mathbb{T}}
\nc{\X}{\mathbb{X}}
\nc{\F}{\mathbb{F}}
\nc{\id}{\msf{id}}
\nc{\A}{\mathbb{A}}
\nc{\Grat}{\mc{Grat}}
\nc{\Squo}[1]{\A^{(#1)}}
\nc{\twist}{\mathrm{twist}}
\nc{\Cd}{\mc{C}}
\nc{\Span}{\mathrm{Span}}
\nc{\Grass}{\mathrm{Gr}}
\nc{\Supp}{\mathrm{Supp}}
\nc{\Irr}{\mathrm{Irr}}
\renc{\o}{\otimes}
\renc{\gr}{\mathsf{gr}}
\nc{\fin}{\mathrm{fin}}
\nc{\aff}{\mathrm{aff}}
\nc{\algD}{\mf{D}}
\nc{\hr}{\mf{h}_{\textrm{reg}}}
\nc{\D}{\mathscr{D}}
\nc{\PIdeg}{\mathrm{PI-degree}}
\nc{\ch}{\mathrm{ch}}
\nc{\ev}{\mathsf{ev}}
\nc{\Stab}{\mathrm{Stab}}
\nc{\Der}{\mathrm{Der}}
\nc{\rightsim}{\stackrel{\sim}{\longrightarrow}}
\nc{\HZ}{H_{\mbf{h},\Z}(\Z_m)}
\nc{\sing}{\mathrm{sing}}
\nc{\dd}{\mathscr{D}}
\nc{\bc}{\mathbf{c}}
\nc{\vc}{\underline{\mathbf{c}}}
\nc{\ba}{\mathbf{a}}
\nc{\reg}{\mathrm{reg}}
\nc{\Amp}{\mathrm{Amp}}
\nc{\Nef}{\mathrm{Nef}}
\nc{\SL}{\mathrm{SL}}
\nc{\Sp}{\mathrm{Sp}}
\nc{\Sym}{\mathrm{Sym}}
\nc{\Mov}{\mathrm{Mov}}
\nc{\Pic}{\mathrm{Pic}}
\nc{\Cs}{\C^{\times}}
\nc{\Nak}[3]{\mf{M}_{{#1}} ({#2},{#3}) }
\nc{\Naka}[2]{\mf{M}({#1},{#2}) }
\nc{\Mtheta}[1]{\mc{M}_{#1}}
\nc{\bw}{\mathbf{w}}
\nc{\bn}{\mathbf{n}}
\nc{\CB}{\mathrm{CB}}
\nc{\GVect}{\Lambda}
\nc{\pZ}{\overline{Z}}
\nc{\Tang}{\mc{T}}
\nc{\K}{\mathbb{K}}
\newcommand{\mr}{\mathrm}
\newcommand{\git}{\ensuremath{/\!\!/\!}}
\nc{\red}[1]{\textcolor{red}{#1}}
\begin{document}

\title{Coulomb branches have symplectic singularities}

\author[G. Bellamy]{Gwyn Bellamy}
\address{School of Mathematics and Statistics, University of Glasgow, University Place,
Glasgow, G12 8QQ.}
\email{gwyn.bellamy@glasgow.ac.uk}

\begin{abstract}
	We show that Coulomb branches for $3$-dimensional $\mathcal{N}=4$ supersymmetric gauge theories have symplectic singularities. This confirms a conjecture of Braverman-Finkelberg-Nakajima. 
\end{abstract}

\maketitle

\section{Introduction}

Let $G$ be a complex reductive algebraic group and $N$ a finite-dimensional representation of $G$. A mathematical definition of the Coulomb branch (of cotangent type) $\mc{M}_C(G,N)$ of a $3$-dimensional $\mathcal{N}=4$ supersymmetric gauge theory associated to $(G,N)$ was introduced in the seminal papers \cite{BFNI,BFNII}. They showed that Coulomb branches have a number of remarkable properties. Of relevance to us is the fact that they are irreducible normal Poisson varieties, where the Poisson structure is non-degenerate on the smooth locus. Therefore it is natural to conjecture, as they do, that Coulomb branches have symplectic singularities in the sense of Beauville \cite{Beauville}.  

Using partial resolutions of singularities constructed from flavour symmetries, it was shown by Weekes \cite{WeekesSympSing} that most Coulomb branches arising from quiver gauge theories have symplectic singularities. In this note, we extend that result by showing that all Coulomb branches have symplectic singularities.

\begin{thm}\label{thm:rankonemain}
$\mc{M}_C(G,N)$ has symplectic singularities. 
\end{thm}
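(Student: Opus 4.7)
The plan is to reduce Theorem \ref{thm:rankonemain} to a transverse-slice computation via the Beauville--Namikawa criterion for symplectic singularities, and then to apply the transverse slice description of Coulomb branches to reduce to the rank-one case.

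First, the setup. By \cite{BFNI,BFNII}, $\mc{M}_C(G,N)$ is already known to be a normal, irreducible, affine Poisson variety whose Poisson bivector is non-degenerate on the smooth locus, giving a symplectic $2$-form $\omega$ there. The content of the theorem is therefore the extension property: for some (equivalently any) resolution $\pi\colon \widetilde{X}\to\mc{M}_C(G,N)$, the pullback $\pi^*\omega$ must extend to a regular $2$-form on $\widetilde{X}$.

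For this I would invoke Namikawa's criterion: under the above hypotheses, $\mc{M}_C(G,N)$ has symplectic singularities if and only if the transverse slice to every codimension-$2$ symplectic leaf is a Du Val (ADE) surface singularity. So one stratifies $\mc{M}_C(G,N)$ by its symplectic leaves and analyses each codimension-$2$ transverse slice. The key structural input is the transverse slice theorem for Coulomb branches established in \cite{BFNI,BFNII}: a transverse slice to a symplectic leaf in $\mc{M}_C(G,N)$ is again a Coulomb branch $\mc{M}_C(L,N_L)$ for a certain Levi $L\subseteq G$ and representation $N_L$ determined by the cocharacter data of the leaf. When the slice has dimension $2$, this forces $L$ to have semisimple rank at most one, reducing the problem to showing that every $2$-dimensional Coulomb branch with semisimple-rank-$\le 1$ gauge group is a Du Val singularity.

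The main obstacle is this final rank-one case, which is also what the label \texttt{thm:rankonemain} emphasises. When $L$ is a torus, $\mc{M}_C(L,N_L)$ is a $2$-dimensional affine toric symplectic variety and hence of type $A_n$. For $L$ of semisimple rank one (so $L$ is an extension of $\SL_2$, $\mathrm{PGL}_2$, or $\GL_2$ by a central torus), one must compute the BFN convolution ring directly and match the resulting surface with a known ADE singularity, uniformly across the possible pairs $(L,N_L)$. This explicit rank-one computation, combined with Namikawa's criterion, yields the theorem. A separate verification that $\mc{M}_C(G,N)$ has rational (or at least Cohen--Macaulay) singularities may also be required depending on the precise form of the criterion invoked, but this should follow from the explicit ring presentation coming from \cite{BFNI,BFNII}, potentially via a Frobenius-splitting or deformation-theoretic argument using the flavour symmetries exploited in \cite{WeekesSympSing}.
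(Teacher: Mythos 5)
Your proposal has two genuine gaps, and it also diverges completely from the route the paper actually takes.

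The first gap is the criterion you invoke. There is no theorem saying that a normal Poisson variety with generically non-degenerate bracket has symplectic singularities \emph{if and only if} its transverse slices to codimension-two leaves are Du Val. The Du Val condition in codimension two is a \emph{necessary} consequence of having symplectic singularities (Namikawa), not a sufficient criterion. The correct form of Namikawa's criterion is that a normal variety with a symplectic form on its smooth locus has symplectic singularities if and only if it has rational Gorenstein singularities --- which is exactly the point you defer to ``a separate verification.'' But establishing rational Gorenstein singularities for $\mc{M}_C(G,N)$ is essentially the whole content of the theorem; once you have it, the slice analysis is superfluous, and without it the slice analysis proves nothing. The second gap is the structural input: a transverse slice theorem identifying slices to symplectic leaves of a general Coulomb branch with Coulomb branches $\mc{M}_C(L,N_L)$ for Levis is not established in \cite{BFNI,BFNII}. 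Indeed, even the existence of a stratification by finitely many symplectic leaves is stated in this paper as a \emph{corollary} of the main theorem, so your argument is circular at the level of its inputs. (You have also over-read the label \texttt{thm:rankonemain}; no rank-one reduction occurs.)

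For comparison, the paper's proof avoids all local analysis of singularities. The key tool is an elementary lemma: if $f \colon X \to Y$ is a generically \'etale Poisson morphism of normal Poisson varieties, $Y$ has symplectic singularities, and the bracket on the smooth locus of $X$ is non-degenerate, then $X$ has symplectic singularities (one pulls the extension property back through a common resolution). This is applied twice to birational Poisson morphisms already constructed in \cite{BFNII}: first $\mc{M}_C(T^k,N) \to \mc{M}_C(T^k, N\oplus\C^k)$, whose target is a toric hyper-K\"ahler manifold (shown to have symplectic singularities via a variation-of-GIT reduction to generic stability parameter and the \'etale symplectic slice theorem); and second $\mc{M}_C(G,N) \to \mc{M}_C(T,N|_T)/W$, whose target inherits symplectic singularities as a finite quotient. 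If you want to salvage your approach, you would need to first prove the Coulomb branch is rational Gorenstein by some independent means, at which point Namikawa's theorem finishes the argument directly.
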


This confirms the "optimistic conjecture'' of Braverman-Finkelberg-Nakajima \cite[3(iv)]{BFNII}. As an immediate corollary we note that:

\begin{cor}
	$\mc{M}_C(G,N)$ has finitely many symplectic leaves. 
\end{cor}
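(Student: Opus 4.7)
The plan is to deduce this corollary directly from Theorem~\ref{thm:rankonemain} by invoking Kaledin's theorem on the Poisson geometry of symplectic singularities. Kaledin showed that any variety $X$ with symplectic singularities admits a canonical, locally finite stratification by smooth symplectic leaves, where each leaf closure is itself a symplectic singularity of strictly smaller dimension. For $X$ affine, local finiteness of this stratification, combined with the descending chain condition on dimensions of leaf closures, forces the total number of leaves to be finite.

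Since the Coulomb branch $\mc{M}_C(G,N)$ is affine by construction in~\cite{BFNI,BFNII}, and since Theorem~\ref{thm:rankonemain} establishes that it has symplectic singularities in the sense of Beauville, Kaledin's theorem applies verbatim and produces the finite symplectic leaf stratification. There is essentially no obstacle here: the entire content lies in Theorem~\ref{thm:rankonemain}, and the corollary is a formal consequence of that theorem together with the standard stratification result for symplectic singularities. The only point to be aware of is the distinction between Kaledin's local finiteness statement and global finiteness, but this is resolved immediately by the fact that $\mc{M}_C(G,N)$ is affine, hence quasi-compact.
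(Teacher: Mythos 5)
Your proposal is correct and matches the paper's (implicit) reasoning: the paper offers no argument beyond calling the corollary immediate, and the intended deduction is exactly the standard one from Kaledin's stratification theorem for varieties with symplectic singularities applied to the affine variety $\mc{M}_C(G,N)$. The only minor imprecision is that in Kaledin's theorem it is the \emph{normalization} of a leaf closure that again has symplectic singularities, and the finiteness of the stratification already follows from Noetherian induction on the dimension of the singular locus, so the appeal to affineness/quasi-compactness is not actually needed.
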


In the case of quiver gauge theories for finite type quivers, the symplectic leaves of $\mc{M}_C(G,N)$ have been explicitly described in \cite{MuthiahWeeksSlices}. See \cite{WeekesSympSing} for other consequences of the main theorem. 

Our proof relies on an elementary observation about varieties with symplectic singularities. Namely, if there is a birational Poisson morphism $X \to Y$ between normal affine varieties and $Y$ is known to have symplectic singularities then so too does $X$. We apply this observation twice - first in the case where $G$ is a (connected) torus to allow us to reduce to the case where the Coulomb branch can be identify with a toric hyper-K\"ahler manifold and secondly to reduce from the case of a Coulomb branch for a general reductive group to one for a torus. In both cases, the birational Poisson morphism we require was already constructed by Braverman-Finkelberg-Nakajima \cite{BFNII}.



\section{The proof}

\subsection{An elementary observation}

Throughout, variety will mean a integral, separated scheme of finite type over the complex numbers. We recall, following \cite{Beauville}, that a variety $X$ has symplectic singularities if it is a normal variety whose smooth locus admits a symplectic form $\omega$ such that for some (any) resolution of singularities $q \colon Z \to X$, $q^*\omega$ extends to a regular $2$-form on $Z$.

The following elementary lemma is the key to the proof of the main theorem. 

\begin{lem}\label{lem:keydominantsymp}
	Let $X,Y$ be complex normal Poisson varieties. Assume that $Y$ has symplectic singularities and the Poisson structure on the smooth locus of $X$ is non-degenerate. If there exists a generically \'etale Poisson morphism $f \colon X \to Y$ then $X$ has symplectic singularities.   
\end{lem}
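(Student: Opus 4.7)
The plan is to transport the regular-extension property from $Y$ to $X$ by building a resolution of $X$ that dominates a resolution of $Y$ and pulling back. First I would produce the symplectic form on $X_{\mathrm{sm}}$: since the Poisson bivector $\pi_X$ is non-degenerate there, its inverse is a non-degenerate $2$-form $\omega_X$, which is closed thanks to the Jacobi identity for the bracket. The analogous symplectic form $\omega_Y$ lives on $Y_{\mathrm{sm}}$ by the hypothesis on $Y$.

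Next I would verify that $f^*\omega_Y = \omega_X$ on the dense open $U \subset X_{\mathrm{sm}}$ on which $f$ is \'etale and takes values in $Y_{\mathrm{sm}}$. (Note that $f$ is automatically dominant: generic \'etaleness forces $\dim X = \dim Y$, and $f(X)$ is constructible of full dimension in the irreducible variety $Y$.) This step is essentially tautological: at each point of $U$ the differential $df$ is a linear isomorphism of tangent spaces, while the Poisson condition $\{g \circ f, h \circ f\}_X = \{g,h\}_Y \circ f$ says precisely that $df$ intertwines $\pi_X$ and $\pi_Y$; inverting gives $\omega_X = f^*\omega_Y$ on $U$.

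The core geometric step is the construction of a commutative square $q \circ \tilde f = f \circ p$ where $q \colon W \to Y$ and $p \colon Z \to X$ are resolutions of singularities and $\tilde f \colon Z \to W$. Choose any resolution $q$ of $Y$; by hypothesis, $q^*\omega_Y$ extends to a regular $2$-form $\eta$ on $W$. Pick any resolution $p_0 \colon Z_0 \to X$. Over the open $V \subset Y$ on which $q$ is an isomorphism, the composite $f \circ p_0$ lifts canonically to $W$, defining a rational map $Z_0 \dashrightarrow W$ with dense domain of definition. Hironaka's resolution of indeterminacies then produces a proper birational $Z \to Z_0$ with $Z$ smooth, together with a morphism $\tilde f \colon Z \to W$ extending this rational map; let $p$ denote the composite $Z \to Z_0 \to X$, which is again proper and birational. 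Then $\tilde f^*\eta$ is a regular $2$-form on $Z$ which, by the previous paragraph, agrees with $p^*\omega_X$ on the dense open $p^{-1}(U)$, so it supplies the required regular extension and $X$ has symplectic singularities.

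The main subtlety is the Poisson-plus-\'etale identification $f^*\omega_Y = \omega_X$ of the second paragraph, as this is the only place where the structural hypotheses on $f$ are used; everything else is formal bookkeeping with resolutions, combining Hironaka with the hypothesis that $Y$ has symplectic singularities.
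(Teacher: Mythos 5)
Your proposal is correct and follows essentially the same strategy as the paper: identify $\omega_X$ with $f^*\omega_Y$ on the \'etale locus via the Poisson condition, build a smooth $Z$ dominating both $X$ and a resolution $W$ of $Y$ (the paper uses the dominating component of the fiber product $W\times_Y X$ where you use resolution of indeterminacies of $Z_0\dashrightarrow W$ --- the same diagram in the end), and pull back the regular extension of $\omega_Y$ from $W$. The only nitpick is that the agreement of $\tilde f^*\eta$ with $p^*\omega_X$ should be asserted on the intersection of $p^{-1}(U)$ with the locus where $\eta$ actually restricts to $q^*\omega_Y$, exactly the bookkeeping the paper does with its open set $V$.
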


\begin{proof}
	The only thing to check is that the pull-back to some resolution of singularities of the symplectic form $\omega$ on the smooth locus of $X$ is regular. Let $\omega_0$ denote the symplectic form on the smooth locus of $Y$. 
	
	We choose a resolution of singularities $p \colon W \to Y$. Let $C$ denote the (unique) irreducible component of $W \times_Y X$ dominating both $Y$ and $X$. By base change, $C \to X$ is a proper generically \'etale map. Taking a resolution of singularities $Z \to C$, we form the commutative diagram
	\begin{equation}\label{eq:commonres}
	\begin{tikzcd}
	Z \ar[dr] \ar[drr,"q"] \ar[ddr,"g"'] & & \\
	& C \ar[r] \ar[d] & X \ar[d,"f"] \\
	& W \ar[r,"p"'] & Y.
	\end{tikzcd}
	\end{equation}
	Since all the maps $f,g,p,q$ are generically \'etale, there exists a dense open subset $U$ of $Y$ such that the restrictions $p^{-1}(U) \to U, f^{-1}(U) \to U$ and $g^{-1}(p^{-1}(U)) \to p^{-1}(U), q^{-1}(f^{-1}(U)) \to f^{-1}(U)$ are  \'etale. We check that $q^* \omega$ extends to a regular form on $Z$. This means that there exists some regular $2$-form (necessarily unique) on $Z$ whose restriction to some dense open subset, over which $q$ is \'etale, agrees with $q^* \omega$. Since $f$ is assumed Poisson, $f^*(\omega_0 |_U) = \omega |_{f^{-1}(U)}$. Therefore, 
	$$
	q^*(\omega |_{f^{-1}(U)}) = q^*(f^*(\omega_0 |_U)) =g^*(p^*(\omega_0 |_U)).
	$$
	Since $Y$ is assumed to have symplectic singularities, there exists a regular $2$-form $\eta$ on $W$ whose restriction to $p^{-1}(U)$ agrees with $p^*(\omega_0 |_U)$. Thus, $q^*(\omega |_{f^{-1}(U)}) |_V$ equals $g^*(\eta) |_{V}$, where $V = g^{-1}(p^{-1}(U)) \cap q^{-1}(f^{-1}(U))$ and $g^*(\eta)$ is a regular $2$-form on $Z$. 
\end{proof}

Instead of using $Y$ to deduce that $X$ has symplectic singularities, one can ask if we can use $X$ to deduce that $Y$ has symplectic singularities. As shown in the result below, the answer is yes, provided the morphism is also assumed proper; see also \cite[Proposition~2.4]{Beauville} or \cite[Lemma~6.12]{BellSchedQuiver}. The result is not required in this paper, but we provide a proof for completeness.

\begin{prop}\label{lem:keydominantsympconverse}
	Let $X,Y$ be complex normal Poisson varieties. Assume that $X$ has symplectic singularities and the Poisson structure on the smooth locus of $Y$ is non-degenerate. If there exists a generically \'etale proper Poisson morphism $f \colon X \to Y$ then $Y$ has symplectic singularities.   
\end{prop}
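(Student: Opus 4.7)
The plan is to mirror the argument of Lemma~\ref{lem:keydominantsymp}, but running in reverse: instead of extracting a regular $2$-form on a resolution of $X$ from one on a resolution of $Y$, I will produce a regular $2$-form on a resolution $\tilde Y \to Y$ by \emph{descending} one from an auxiliary smooth cover sitting over $X$. Properness of $f$ is the extra ingredient that makes this descent possible.

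Concretely, fix resolutions $\pi \colon Z \to X$ and $\rho \colon \tilde Y \to Y$. Since $X$ has symplectic singularities, the pullback of the symplectic form $\omega_X$ on the smooth locus of $X$ extends to a regular $2$-form $\eta_X$ on $Z$. The goal is to show that $\alpha := \rho^* \omega_Y$, a priori only a rational $2$-form on $\tilde Y$, extends regularly to all of $\tilde Y$. Let $C$ be the unique irreducible component of $Z \times_Y \tilde Y$ dominating both factors and let $W \to C$ be a resolution of singularities; write $g \colon W \to Z$ and $h \colon W \to \tilde Y$ for the two projections. Base change ensures $g$ is proper and birational (so $W$ is itself a resolution of $X$), while $h$ inherits from $f \circ \pi$ the properties of being proper, surjective and generically \'etale.

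Since $f$ is Poisson and $\omega_Y$ is non-degenerate on $Y_{\mathrm{sm}}$, the identity $f^* \omega_Y = \omega_X$ holds on the common \'etale locus, exactly as used in the proof of Lemma~\ref{lem:keydominantsymp}. Pulling this identity back through $W$ in the two evident ways gives $h^* \alpha = g^* \eta_X$ on a dense open subset of $W$, and since the right-hand side is regular on $W$, it follows that $h^* \alpha$ extends to a regular $2$-form on $W$.

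The main obstacle, and the one genuinely new step relative to Lemma~\ref{lem:keydominantsymp}, is the following \emph{descent}: if $h \colon W \to \tilde Y$ is a proper surjective morphism between smooth varieties and a rational $2$-form $\alpha$ on $\tilde Y$ has $h^* \alpha$ extending to a regular form on $W$, then $\alpha$ itself extends regularly to $\tilde Y$. To verify this one localizes at the generic point of an arbitrary prime divisor $D \subset \tilde Y$; properness and surjectivity of $h$ together with the fibre-dimension theorem yield a prime divisor $D' \subset W$ mapping dominantly to $D$. Choosing local uniformizers $t, s$ for $D$ and $D'$, one has $h^* t = u s^e$ for some unit $u$, and a short residue calculation shows that a pole of order $k \geq 1$ of $\alpha$ along $D$ forces a pole of $h^* \alpha$ along $D'$ unless the entire polar part of $\alpha$ restricts to zero on $D$. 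Because $h|_{D'} \colon D' \to D$ is a dominant morphism of integral varieties in characteristic $0$, hence generically separable, pullback of forms along $h|_{D'}$ is injective, so vanishing of the polar part forces $k$ to drop by one. Induction on $k$ completes the argument, handling ramified divisors (where $e > 1$) uniformly.
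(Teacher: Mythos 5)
Your proposal is correct and follows essentially the same route as the paper: build a common resolution over the fibre product, use the Poisson condition to identify the pulled-back forms on it, and then descend regularity of the $2$-form along the resulting proper, generically finite map by a local computation at a prime divisor dominating a given polar divisor (the paper's Lemma on meromorphic $k$-forms). The only real difference is organizational: the paper first reduces its descent lemma to a finite morphism via Stein factorization and a codimension-two argument before doing the DVR computation, whereas you work directly with a component of the preimage dominating the polar divisor and use injectivity of pullback of forms along the dominant map of divisors (valid in characteristic zero), which also lets you state the descent for proper surjective rather than just generically \'etale maps; both versions hinge on the same computation that $h^*t = us^e$ turns a pole of order $r$ into one of order $e(r-1)+1 \ge 1$.
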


The outline of the proof is the same as that of Lemma~\ref{lem:keydominantsymp}. The difference is that we now have a meromorphic form $p^* \omega_0$ on $W$ that we wish to show is regular. Diagram \eqref{eq:commonres} implies that $g^*(p^* \omega_0) = q^*(f^* \omega)$ is regular on $Z$. We deduce from the key lemma below that $p^* \omega_0$ is regular.    

\begin{lem}\label{lem:meromorphickforms}
	Let $g \colon Z \rightarrow W$ be a proper, generically \'etale morphism between smooth complex varieties. Then a meromorphic $k$-form $\omega$ on $W$ is regular if and only if $g^* \omega$ is regular. 
\end{lem}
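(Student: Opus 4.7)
The ``only if'' direction is immediate, since the pullback of a regular form under any morphism of smooth varieties is regular. The content is the converse, and my plan is to prove it by a local computation in \'etale coordinates, reducing to codimension one via a Hartogs-type principle.

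Since $W$ is smooth, a meromorphic $k$-form on $W$ is regular if and only if it is regular at every codimension-one point. Therefore it suffices to show that if $g^*\omega$ is regular, then $\omega$ has no pole along any prime divisor $D \subset W$. I would argue by contradiction: suppose $\omega$ has a pole of order $m \geq 1$ along some prime divisor $D$. The map $g$ is proper and generically \'etale, and $Z$ is a variety (hence irreducible) of the same dimension as $W$, so $g$ is dominant; combined with properness this gives surjectivity. Let $U \subset W$ be the dense open \'etale locus of $g$, and pick a smooth point $w_0 \in D \cap U$ at which the ``leading coefficient'' of $\omega$ along $D$ does not vanish (a nonempty open condition on $D$). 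Choose any preimage $z_0 \in g^{-1}(w_0)$; by construction $g$ is \'etale at $z_0$.

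Now I would work locally. Choose local coordinates $y_1,\ldots,y_n$ on $W$ near $w_0$ with $D$ cut out by $y_1 = 0$; since $g$ is \'etale at $z_0$, the pullbacks $x_i := g^*y_i$ form \'etale local coordinates on $Z$ near $z_0$. Expanding $\omega = y_1^{-m}\sum_{|I|=k} f_I(y)\,dy_I$ with each $f_I$ regular near $w_0$ and $f_{I_0}(w_0) \neq 0$ for some $I_0$ (by definition of pole order $m$ and our choice of $w_0$), the pullback formula gives
\[
g^*\omega \;=\; x_1^{-m}\sum_{|I|=k}(f_I\circ g)(x)\,dx_I
\]
in a neighborhood of $z_0$. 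Linear independence of the $dx_I$ over the fraction field means the coefficient of $dx_{I_0}$ cannot be cancelled by other summands, and $(f_{I_0}\circ g)(z_0) = f_{I_0}(w_0)\neq 0$, so $g^*\omega$ has a genuine pole of order $m$ along $\{x_1 = 0\}$ near $z_0$. This contradicts the regularity of $g^*\omega$, and completes the argument.

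The only place properness is used in an essential way is to guarantee that every prime divisor $D \subset W$ has a preimage point in $Z$; without surjectivity, $g$ could simply avoid the polar locus of $\omega$ and the lemma would fail. This is the only step I regard as genuinely load-bearing; everything else is a routine \'etale-local computation.
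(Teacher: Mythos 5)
The ``only if'' direction and the reduction to codimension one are fine, but there is a genuine gap in the main step: you choose a point $w_0 \in D \cap U$, where $U$ is the locus over which $g$ is \'etale. Such a point need not exist. The polar divisor $D$ of $\omega$ may be entirely contained in the branch locus of $g$, in which case every preimage of the generic point of $D$ is a ramification point and $D \cap U = \emptyset$. This is not a degenerate corner case; it is exactly the hard case, and your \'etale-local coordinate computation says nothing about it. Already for $g \colon \mathbb{A}^1 \to \mathbb{A}^1$, $z \mapsto z^2$, and $\omega = dw/w$, one has $g^*\omega = 2\,dz/z$: the pole survives, but its order is \emph{not} what the naive pullback of the coefficient $w^{-1} \mapsto z^{-2}$ would predict, because $g^*(dw) = 2z\,dz$ vanishes along the ramification divisor and partially cancels the pole. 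The real content of the lemma is that this cancellation is never complete.

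The paper's proof is devoted precisely to this case. After using Stein factorization (a second essential use of properness, beyond surjectivity) to reduce to $g$ finite, it localizes at a component $E$ of $g^{-1}(D)$ dominating $D$, where $g^* \colon \mathcal{O}_{W,D} \hookrightarrow \mathcal{O}_{Z,E}$ is an extension of discrete valuation rings with ramification index $\ell$, writes $g^*(w_1) = t^{\ell} u$ for a uniformizer $t$ and unit $u$, and computes that a summand $w_1^{-r} s \, dw_1 \wedge dw_{i_1} \wedge \cdots$ with $r \ge 1$ pulls back to a form with a pole of order $\ell(r-1)+1 \ge 1$ along $E$ (while a summand not involving $dw_1$ acquires a pole of order $\ell r$), so regularity of $g^*\omega$ still forces regularity of $\omega$. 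Your argument is correct, and agrees with the paper, for those divisors $D$ whose generic point has at least one unramified preimage; to complete the proof you must add the ramified-divisor computation.
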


\begin{proof}
	Our assumptions imply that $g$ is surjective. First, we note that the locus where $\omega$ is not regular is a divisor on $W$; locally we can pick $w_1, \ds, w_n$ such that $d w_1 , \ds, d w_n$ are a basis of $\Omega_W^1$. Then $\omega$ can be uniquely expressed as $\sum_{\mathbf{i}} a_{\mathbf{i}} d w_{\mathbf{i}}$, and the non-regular locus of $\omega$ is the union of the non-regular loci of the meromorphic functions $a_{\mathbf{i}}$. 
	
	Next, we claim that the locus (on $W$) where $g$ is finite has complement of codimension at least two. Since $g$ is assumed proper, Stein factorization says that we can factor $g = \phi \circ h$, where $h \colon Z \rightarrow T$ has connected fibers and $\phi \colon T \rightarrow W$ is finite. It suffices then to show that the locus of points $t$ on $T$ where $\dim h^{-1}(t) = 0$ has complement $C$ of codimension at least two. But if $c$ is a generic point of an irreducible component $C_0$ of $C$, then $\dim C_0 + \dim h^{-1}(c) < \dim Z$ since $h^{-1}(C_0)$ is a proper closed subset of $Z$. Since $\dim h^{-1}(c) \ge 1$, this implies that $\dim C_0 < \dim T - 1$. 
	
	Therefore, we may assume that $g$ is a finite morphism. If $U \subset W$ is any open set such that $g$ is \'etale on $g^{-1}(U)$ then it is clear that $\omega |_U$ is regular if and only if $g^* (\omega |_U)$ is regular. Thus, we just need to consider a generic point $w \in g(R_g) \subset W$, where $R_g$ is the ramification divisor of $g$. Let $D \subset g(R_g)$ be an irreducible component, and $E \subset R_g$ an irreducible component of $R_g$ mapping onto $D$. We may assume that $w_1 = 0$ is a local equation for $D$ at $w$. Since $Z$ and $W$ are smooth, the local rings $\mc{O}_{W,D}$ and $\mc{O}_{Z,E}$ are (noetherian) regular local rings of dimension one; that is, they are discrete valuation rings. We have an embedding $g^* \colon \mc{O}_{W,D} \rightarrow \mc{O}_{Z,E}$ , with $\mc{O}_{Z,E}$ finite over $\mc{O}_{W,D}$. The function $w_1$ is a uniformizer for $\mc{O}_{W,D}$, and choosing a uniformizer $t$ for $\mc{O}_{Z,E}$, we have $g^*(w_1) = t^{\ell} u$ for some unit $u \in \mc{O}_{Z,E}^{\times}$. Here $\ell$ is the ramification index of $D$. The module $\Omega_{\mc{O}_{Z,E}}^1$ has basis $d t, d g^*(w_2), \ds, d g^*(w_n)$. Therefore, if $\omega_{\mathbf{i}} = a_{\mathbf{i}} d w_1 \wedge d w_{i_1} \wedge \cdots \wedge d w_{i_{k-1}}$ is a summand of $\omega$, for some $1 < i_1 < \cdots < i_{k-1} \le n$, then  
	$$
	g^* \omega_{\mathbf{i}} = \ell u g^*(a_{\mathbf{i}}) t^{\ell-1} d t \wedge d g^*(w_{i_1}) \wedge \cdots \wedge d g^*(w_{i_{k-1}}) + g^* (a_{\mathbf{i}}) t^{\ell} d u \wedge d g^*(w_{i_1}) \wedge \cdots \wedge d g^*(w_{i_{k-1}}).
	$$
	If $a_{\mathbf{i}} = w_1^{-r} s$ for some unit $s \in \mc{O}_{W,D}$ and $r \ge 1$, then $\ell u g^*(a_{\mathbf{i}}) t^{\ell-1} = \ell u^{1-r} g^*(s) t^{-\ell(r-1) - 1}$ has a pole of order $\ell(r-1) + 1 \ge 1$. Since $d u \in \bigoplus_{i \ge 2} \mc{O}_{Z,E} \,  d g^*(w_i)$, we deduce that $\omega_{\mathbf{i}}$ is regular if and only if $g^* \omega_{\mathbf{i}}$ is regular. 
\end{proof}

\subsection{Toric hyper-K\"ahler manifolds}

Consider a short exact sequence 
\begin{equation}\label{eq:exactintegerseq}
	0 \to \Z^k \stackrel{B}{\longrightarrow} \Z^n  \stackrel{A}{\longrightarrow} \Z^d \to 0. 
\end{equation}
We assume that no row of $B$ is zero. Write $T := \C^{\times}$ for the one torus. The above sequence encodes an action of $T^d$ on $\C^n$ via 
$$
(t_1, \ds, t_d) \cdot x_i = t_1^{a_{i,1}} \cdots t_d^{a_{i,d}} x_i.
$$
Since \eqref{eq:exactintegerseq} is exact, the stabilizer of any $x \in \C^n$, with $x_i \neq 0$ for all $i$, is trivial. In particular, the action is effective. The induced action on $T^* \C^n$ is Hamiltonian and we write $\mu \colon T^* \C^n \to \mf{t}_d^*$ for the associated moment map. Explicitly, 
\[
\mu(x,y) = \left( \sum_i a_{i,j} x_i y_i \right)_{j = 1}^d. 
\] If $\theta$ denotes a rational character of $T^d$ and $\zeta \in \mf{t}_d^*$ then we can take Hamiltonian reduction 
$$
\mc{M}_H(\theta,\zeta) := \mu^{-1}(\zeta)^{\theta} \git \, T^d. 
$$
Here $\mu^{-1}(\zeta)^{\theta}$ denotes the open subset of $\theta$-semistable points in $\mu^{-1}(\zeta)$. The variety $\mc{M}_H(\theta,\zeta)$ is a \textit{toric hyper-K\"ahler manifold} (also called a \textit{hypertoric variety} in the literature) and is a Higgs branch for the gauge theory $(T^d,\C^n)$. The fact that these varieties have symplectic singularities is well-known, but the proofs in the literature, \cite[Proposition~4.11]{HypertoricBB} or \cite[Theorem~2.16]{NagaokaUniversal}, always assume that the matrix $A$ is unimodular (so that the variety admits a symplectic resolution given by variation of GIT). Since we will need to consider matrices $A$ that are not unimodular, we explain how to extend this result to general toric hyper-K\"ahler manifolds.

\begin{lem}\label{lem:hypertoricbirational}
	Choose $\theta,\theta'$ such that $\mu^{-1}(\zeta)^{\theta'} \subset \mu^{-1}(\zeta)^{\theta}$. Then there exists a projective birational Poisson morphism $\mc{M}_H(\theta',\zeta) \to \mc{M}_H(\theta,\zeta)$.
\end{lem}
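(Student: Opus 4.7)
The plan is to apply standard variation of GIT (VGIT) to the $T^d$-action on the affine variety $\mu^{-1}(\zeta)$. Recall that for any character $\vartheta$ of $T^d$, the GIT quotient $\mc{M}_H(\vartheta,\zeta) = \mu^{-1}(\zeta)^\vartheta \git T^d$ is constructed as the relative Proj of the graded $T^d$-invariant section algebra $\bigoplus_{n \ge 0}(\C[\mu^{-1}(\zeta)] \o \C_{n\vartheta})^{T^d}$ over the affine categorical quotient $\mu^{-1}(\zeta) \git T^d$. In particular, the structural morphism $\mc{M}_H(\vartheta,\zeta) \to \mu^{-1}(\zeta) \git T^d$ is projective for every $\vartheta$.

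The hypothesis $\mu^{-1}(\zeta)^{\theta'} \subseteq \mu^{-1}(\zeta)^{\theta}$ gives a $T^d$-invariant morphism obtained by composing the open inclusion with the quotient map:
$$
\mu^{-1}(\zeta)^{\theta'} \hookrightarrow \mu^{-1}(\zeta)^{\theta} \longrightarrow \mc{M}_H(\theta,\zeta).
$$
By the universal property of the good categorical quotient $\mu^{-1}(\zeta)^{\theta'} \to \mc{M}_H(\theta',\zeta)$, this factors uniquely through a morphism $f \colon \mc{M}_H(\theta',\zeta) \to \mc{M}_H(\theta,\zeta)$ over the common affine base $\mu^{-1}(\zeta) \git T^d$. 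Since both sides are projective over this base, $f$ is projective.

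For birationality, I would observe that both $\mc{M}_H(\theta,\zeta)$ and $\mc{M}_H(\theta',\zeta)$ are irreducible (being GIT quotients of dense open subsets of the irreducible variety $\mu^{-1}(\zeta)$) and share the same function field $\C(\mu^{-1}(\zeta))^{T^d}$. Since $f$ is induced by the inclusion of one open subset of $\mu^{-1}(\zeta)$ into another, it is the identity on function fields, hence birational. Equivalently, since \eqref{eq:exactintegerseq} is exact, the $T^d$-action on $\C^n$ is effective, so there is a $T^d$-stable dense open subset $U \subseteq \mu^{-1}(\zeta)$ on which the action is free and whose image lies in both semistable loci; the morphism $f$ restricts to the identity on $U / T^d$.

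The Poisson property is then automatic: both sides inherit Poisson structures from the standard Hamiltonian reduction of the canonical symplectic form on $T^*\C^n$, and these agree on the common dense open $U/T^d$ by construction, hence globally by normality. I do not anticipate a serious obstacle; the main content is the VGIT construction of $f$, which applies uniformly without any unimodularity assumption on $A$.
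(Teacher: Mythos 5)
Your construction of the projective morphism $f$ via relative Proj over the affine quotient is exactly the standard argument the paper invokes, and the Poisson statement is handled the same way. The problem is the birationality step, where your argument assumes precisely what the paper has to prove. The claim that both quotients ``share the same function field $\C(\mu^{-1}(\zeta))^{T^d}$'' is not automatic for GIT quotients: the function field of $\mu^{-1}(\zeta)^{\vartheta} \git T^d$ equals $\C(\mu^{-1}(\zeta))^{T^d}$ only when the generic fibre of the quotient map is a single closed orbit, i.e.\ when stable points exist. Compare $\Cs$ acting on $\C^2$ by scaling: the quotient at $\theta=0$ is a point with function field $\C$, while the quotient at a nontrivial character is $\mathbb{P}^1$ with function field $\C(x/y)=\C(\C^2)^{\Cs}$, and the induced map is very far from birational. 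Your fallback argument has the same gap in two places: effectiveness of the action on $\C^n$ gives a free locus $U$ in $\C^n$, but (i) you must show $\mu^{-1}(\zeta)\cap U\neq\emptyset$, which is not obvious since $\mu^{-1}(\zeta)$ is a proper subvariety, and (ii) freeness alone does not make $f$ injective on $U/T^d$ --- you also need the orbits through $U\cap\mu^{-1}(\zeta)$ to be closed, since a good quotient only separates closed orbits.

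These two points are the actual content of the paper's proof. It shows (a) that every $T^d$-orbit through a point with all $x_i,y_i\neq 0$ is closed, because the invariant functions $x_iy_i$ are constant on orbit closures and force all coordinates to stay nonzero; and (b) that $\mu^{-1}(\zeta)$ meets this locus, by putting $A$ in Hermite form and using the hypothesis that no row of $B$ vanishes to solve the moment map equations with all $x_iy_i\neq 0$. Step (b) genuinely uses the no-zero-row assumption (which you never invoke), and without it the lemma can fail. So your proposal correctly sets up the morphism but is missing the key idea needed to show it is birational.
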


\begin{proof}
	Since we have assumed that no row of $B$ is zero, $\mu^{-1}(\zeta)$ is a reduced, irreducible complete intersection \cite[Lemma~4.7]{HypertoricBB}. Moreover, as shown in \cite[Proposition~4.11]{HypertoricBB} when $A$ is unimodular and in \cite{SVdBhypertoric} in general, the quotient $\mc{M}_H(\theta,\zeta)$ is normal. Since the variety is constructed as a Hamiltonian reduction, the Poisson bracket on $\mc{O}_{T^* \C^n}$ descends to a Poisson bracket on $\mc{O}_{\mc{M}_H(\theta,\zeta)}$. 
		
	The fact that there is a projective Poisson morphism $\pi \colon \mc{M}_H(\theta',\zeta) \to \mc{M}_H(\theta,\zeta)$ is a direct consequence of Hamiltonian reduction; see for instance the proof of \cite[Lemma~2.4]{BellSchedQuiver}. We need to check that it is birational. 
	
	Let $\mu^{-1}(\zeta)^{\theta\, \mr{st}}$ denote the set of $\theta$-stable points in $\mu^{-1}(\zeta)$ and $\mc{M}_H(\theta,\zeta)^{\theta\, \mr{st}}$ its image in $\mc{M}_H(\theta,\zeta)$. The map $\pi$ is bijective over $\mc{M}_H(\theta,\zeta)^{\theta\, \mr{st}}$. Hence, we need to show that $\mc{M}_H(\theta,\zeta)^{\theta\, \mr{st}}$ (or equivalently, $\mu^{-1}(\zeta)^{\theta\, \mr{st}}$) is non-empty. Since $\mu^{-1}(\zeta)^{0\, \mr{st}}$ is contained in $\mu^{-1}(\zeta)^{\theta\, \mr{st}}$, it suffices to show that $\mu^{-1}(\zeta)^{0\, \mr{st}} \neq \emptyset$. In other words, there exists a closed orbit in $\mu^{-1}(\zeta)$ with finite (in fact trivial) stabilizer. Let $U \subset T^* V$ consist of all points $(x,y)$ with $x_i,y_i \neq 0$ for all $1 \le i \le n$. As noted previously, the stabilizer of any point in $U$ is trivial. We claim that (a) every orbit in $U$ is closed in $T^* V$, and (b) $\mu^{-1}(\zeta) \cap U \neq \emptyset$. Thus, (a) and (b) would imply $\emptyset \neq \mu^{-1}(\zeta) \cap U  \subset \mu^{-1}(\zeta)^{0\, \mr{st}}$. 
	
	Let $(p,q) \in U$. If $p_i q_i =: \lambda_i \in \Cs$ then the equation $x_i y_i = \lambda_i$ holds for all points in $\overline{T^d \cdot (p,q)}$. But this forces $x_i,y_i \neq 0$ for all points $(x,y)$ in $\overline{T^d \cdot (p,q)}$. That is, $\overline{T^d \cdot (p,q)} \subset U$. Since all orbits in $U$ are free, we have $\overline{T^d \cdot (p,q)} = T^d \cdot (p,q)$ proving (a).  
	
	For (b), the exactness of \eqref{eq:exactintegerseq} implies that the rank of $A$ is $d$. Therefore, permuting the $x_i$, we may assume that the first $d \times d$ block of $A$ has non-zero determinant. Applying an automorphism to $T^d$ corresponds to multiplying $A$ on the left by a unimodular $d \times d$ matrix $U$. Therefore, replacing $A$ by $U A$, we may assume that $A$ is in Hermite form. In particular, the moment map relations $\mu(x,y) = \zeta$ become 
	\begin{equation}\label{eq:relHermite}
		x_i y_i = a_{i,i}^{-1} \zeta_i - \sum_{j > i} a_{i,i}^{-1} a_{j,i} x_j y_j.	
	\end{equation}
	Making further substitutions (and replacing $a_{i,i}^{-1} \zeta_i$ by some $\zeta_i'$), we may assume $a_{j,i} = 0$ for $j \le d$ in the relations \eqref{eq:relHermite}. The fact that no row of $B$ is zero translates into the fact that for each $1 \le i \le d$ there exists some $j > d$ with $a_{j,i} \neq 0$. This means that for generic $(x_{d+1},\ds,x_n,y_{d+1},\ds,y_n)$ with $x_j,y_j \neq 0$ the relations \eqref{eq:relHermite} can be satisfied, but only with $x_i y_i \neq 0$ for $1 \le i \le d$ too. Thus, $\mu^{-1}(\zeta) \cap U \neq \emptyset$. 
\end{proof}

\begin{prop}\label{prop:torichyperissymplectic}
	The toric hyper-K\"ahler manifold $\mc{M}_H(\theta,\zeta)$ has symplectic singularities. 
\end{prop}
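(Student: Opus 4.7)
My approach combines Lemmas \ref{lem:hypertoricbirational} and \ref{lem:keydominantsymp} with the known unimodular case in two stages.

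First, I would reduce to the affine quotient $\mc{M}_H(0,\zeta)$. Since every point is trivially $0$-semistable, $\mu^{-1}(\zeta)^{\theta} \subseteq \mu^{-1}(\zeta) = \mu^{-1}(\zeta)^{0}$ for any $\theta$, and Lemma~\ref{lem:hypertoricbirational} produces a projective birational Poisson morphism $\pi_{\theta} \colon \mc{M}_H(\theta,\zeta) \to \mc{M}_H(0,\zeta)$. A birational morphism is generically an isomorphism, hence generically \'etale, and the Poisson structure on the smooth locus of $\mc{M}_H(\theta,\zeta)$ is non-degenerate because it arises by Hamiltonian reduction from the symplectic form on $T^*\C^{n}$. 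So Lemma~\ref{lem:keydominantsymp} reduces the problem to showing that $\mc{M}_H(0,\zeta)$ has symplectic singularities.

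Second, I would handle the affine variety. When $A$ is unimodular, this is \cite[Proposition~4.11]{HypertoricBB} (or \cite[Theorem~2.16]{NagaokaUniversal}): for generic $\theta$, $\mc{M}_H(\theta,\zeta)$ is smooth symplectic and gives a symplectic resolution of $\mc{M}_H(0,\zeta)$. For general $A$, I would choose $\theta$ in the interior of a GIT chamber so that $\theta$-stability coincides with $\theta$-semistability, and then use a coordinate-flat computation to show that at every $\theta$-stable point the $T^{d}$-stabilizer is a finite abelian group (controlled by the Smith normal form of the appropriate $d \times d$ submatrices of $A$). Then $\mc{M}_H(\theta,\zeta)$ is a symplectic orbifold, and such orbifolds have symplectic singularities by the classical finite-quotient result of \cite{Beauville}. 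Taking any resolution $r \colon Z \to \mc{M}_H(\theta,\zeta)$, the composite $\pi_{\theta} \circ r$ is a resolution of $\mc{M}_H(0,\zeta)$; the symplectic form on the smooth locus of $\mc{M}_H(0,\zeta)$ pulls back via $\pi_{\theta}$ to the symplectic form on the smooth locus of $\mc{M}_H(\theta,\zeta)$ (since $\pi_{\theta}$ is Poisson), and the latter extends regularly to $Z$ by the symplectic singularity property just established.

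The main obstacle is the orbifold step for non-unimodular $A$: one must verify that for $\theta$ interior to a chamber the isotropy of every $\theta$-stable point is finite, so that the failure of unimodularity appears as finite cyclic stabilizers rather than as a failure of stability itself. This is a careful but standard combinatorial argument with the rows of $A$ restricted to coordinate flats, which the construction of the unimodular case in the cited references handles implicitly. An alternative route would be to invoke Proposition~\ref{lem:keydominantsympconverse} to transfer symplectic singularities from $\mc{M}_H(\theta,\zeta)$ to $\mc{M}_H(0,\zeta)$ directly along $\pi_{\theta}$; the plan above has the virtue of using only Lemma~\ref{lem:keydominantsymp}.
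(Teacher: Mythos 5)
Your strategy is essentially the paper's: reduce to a generic stability parameter via projective birational Poisson morphisms between GIT quotients (Lemma~\ref{lem:hypertoricbirational}), observe that for generic $\theta$ the quotient is \'etale-locally a finite symplectic quotient of a vector space, and conclude with Beauville's finite-quotient theorem. The routing differs: you pass through the affine quotient $\mc{M}_H(0,\zeta)$, using Lemma~\ref{lem:keydominantsymp} to climb back up to $\mc{M}_H(\theta,\zeta)$ and a hand-rolled version of Proposition~\ref{lem:keydominantsympconverse} to push the generic case down to $\mc{M}_H(0,\zeta)$; the paper instead chooses a generic $\theta'$ with $\mu^{-1}(\zeta)^{\theta'}\subset\mu^{-1}(\zeta)^{\theta}$ and applies the proper-morphism transfer (\cite[Lemma~6.12]{BellSchedQuiver}, i.e.\ Proposition~\ref{lem:keydominantsympconverse}) once, directly to $\mc{M}_H(\theta',\zeta)\to\mc{M}_H(\theta,\zeta)$. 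Both routings work; yours costs an extra morphism but avoids needing that generic chambers refine the given one.

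Two steps are under-justified as written. First, Lemma~\ref{lem:keydominantsymp} requires the Poisson structure to be non-degenerate on the \emph{whole} smooth locus of $\mc{M}_H(\theta,\zeta)$, and ``it arises by Hamiltonian reduction'' only gives non-degeneracy where $T^d$ acts freely with closed orbits. Even generic non-degeneracy is not automatic: it needs the existence of a free closed orbit in $\mu^{-1}(\zeta)$, which is exactly the content of claims (a) and (b) in the proof of Lemma~\ref{lem:hypertoricbirational} and uses the hypothesis that no row of $B$ vanishes. The paper first proves generic non-degeneracy this way and obtains non-degeneracy on the full smooth locus only a posteriori, from the \'etale-local orbifold model in the generic case. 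Second, ``symplectic orbifold, hence symplectic singularities by Beauville'' conflates two things: \cite[Proposition~2.4]{Beauville} applies to $V/\Gamma$ with $V$ a symplectic vector space and $\Gamma$ finite, and to reduce to that model you need the (\'etale) symplectic slice theorem in addition to finiteness of stabilizers. The paper cites \cite[Proposition~6.2]{HS} for the finiteness of stabilizers at generic $\theta$ (so your Smith-normal-form computation can be replaced by a citation) and \cite[Theorem~3.8]{BellSchedQuiver} for the slice theorem. Both points are fixable, but they are precisely where the mathematical content of the proposition lives, so they should be made explicit.
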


\begin{proof}
	As noted in the proof of Lemma~\ref{lem:hypertoricbirational}, the quotient $\mc{M}_H(\theta,\zeta)$ is a normal Poisson variety. Moreover, since $T^d$ acts freely, with closed orbits, on the non-empty open set $\mu^{-1}(\zeta)^{\theta\, \mr{st}} \cap U$, the Poisson structure on $\mc{M}_H(\theta,\zeta)$ is generically non-degenerate. 
	
	
	Choose a generic $\theta'$ such that $\mu^{-1}(\zeta)^{\theta'} \subset \mu^{-1}(\zeta)^{\theta}$. Then Lemma~\ref{lem:hypertoricbirational} says that there exists a projective birational Poisson morphism $\mc{M}_H(\theta',\zeta) \to \mc{M}_H(\theta,\zeta)$. If $\mc{M}_H(\theta',\zeta)$ admits symplectic singularities then \cite[Lemma~6.12]{BellSchedQuiver} implies that  $\mc{M}_H(\theta,\zeta)$ will also admit symplectic singularities. Thus, we may assume that $\theta$ is generic.  
	
	To check that $\mc{M}_H(\theta,\zeta)$ has symplectic singularities, it suffices to check \'etale locally. As explained in the proof of \cite[Proposition~6.2]{HS}, the fact that $\theta$ is generic means that the stabilizer under $T^d$ of each point in $\mu^{-1}(\zeta)^{\theta}$ is finite. Therefore, the (\'etale) symplectic slice theorem e.g. \cite[Theorem~3.8]{BellSchedQuiver}\footnote{This is stated and proved for Nakajima quiver varieties, but both the statement and proof go through without change for any reductive group acting symplectically on a symplectic vector space.}, says that \'etale locally $\mc{M}_H(\theta,\zeta)$ is isomorphic (as a Poisson variety) to the quotient of a symplectic vector space by a finite (abelian) group acting symplectically. In particular, it has symplectic singularities by \cite[Proposition~2.4]{Beauville} and hence the Poisson structure is non-degenerate on the whole of the smooth locus of $\mc{M}_H(\theta,\zeta)$.  
\end{proof}

Whilst this note was in preparation, the above statement also appeared as \cite[Proposition~5.1]{BielawskiFoscolo}. 

\subsection{Coulomb branches}

Coulomb branches are normal varieties whose smooth locus admits a symplectic form \cite{BFNII}. Let $G^{\circ}$ be the connected component of the identity in $G$. Then, as noted in \cite[Remarks~2.8(3)]{BFNII}, $\mc{M}_C(G,N) \cong \mc{M}_C(G^{\circ},N)/ ( G / G^{\circ})$ as Poisson varieties. Hence $\mc{M}_C(G,N)$ will have symplectic singularities by \cite[Proposition~2.4]{Beauville} if we can show that $\mc{M}_C(G^{\circ},N)$ has symplectic singularities. Therefore, we may assume that $G$ is connected. We first consider the abelian case.

\begin{lem}\label{lem:abeilanCoulomb}
Assume $G = T^k$ is a torus. Then $\mc{M}_C(G,N)$ has symplectic singularities. 
\end{lem}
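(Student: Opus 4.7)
The plan is to deduce the lemma from Proposition~\ref{prop:torichyperissymplectic} by applying Lemma~\ref{lem:keydominantsymp} with $X = \mc{M}_C(T^k,N)$ and $Y$ an appropriately chosen toric hyper-K\"ahler manifold. The hypotheses on $X$ in Lemma~\ref{lem:keydominantsymp} --- that $X$ is normal and that its Poisson structure is non-degenerate on the smooth locus --- are established general properties of Coulomb branches from \cite{BFNII}, so the task reduces to producing a suitable $Y$ and a generically \'etale Poisson morphism $f\colon X \to Y$.

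To build $Y$, I would read off the weights $b_1, \ldots, b_n \in \Z^k$ of $N$ as a $T^k$-representation. After splitting off any trivial summands of $N$ (which contribute symplectic vector space factors that do not affect whether the total variety has symplectic singularities), I may assume no weight is zero. The weights assemble into a lattice map $B \colon \Z^k \to \Z^n$, and saturating the cokernel produces a short exact sequence of the form \eqref{eq:exactintegerseq}. This data defines a toric hyper-K\"ahler manifold $Y := \mc{M}_H(0,0)$, which has symplectic singularities by Proposition~\ref{prop:torichyperissymplectic}.

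The morphism $f$ comes from the explicit description of abelian Coulomb branches developed in \cite{BFNII}. For $G$ a torus they give a concrete presentation of the coordinate ring of $\mc{M}_C(T^k, N)$ in terms of monopole operators labelled by the coweight lattice of $T^k$, together with $T^k$-invariant polynomial functions coming from $\mf{t}^k \oplus N$. This presentation can be compared directly with the standard invariant-theoretic description of the coordinate ring of the affine hyper-K\"ahler quotient attached to the sequence above, and the comparison produces a birational Poisson morphism $f \colon \mc{M}_C(T^k,N) \to Y$, which in particular is generically \'etale. Lattice torsion phenomena coming from the cokernel of $B$ are precisely what prevents $f$ from being an isomorphism, which is why Lemma~\ref{lem:keydominantsymp} (and not a direct identification) is needed.

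The main obstacle I anticipate is implementing this comparison: matching BFN's monopole presentation of the Coulomb branch against the toric invariant theory description of $Y$ so as to produce a morphism of algebras that is manifestly Poisson and birational. Apart from this bookkeeping, the rest of the argument is mechanical: once $f$ is in hand, Lemma~\ref{lem:keydominantsymp} immediately delivers the conclusion.
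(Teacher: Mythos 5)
Your overall strategy --- deduce the lemma from Proposition~\ref{prop:torichyperissymplectic} via Lemma~\ref{lem:keydominantsymp}, with $X = \mc{M}_C(T^k,N)$ and $Y$ a toric hyper-K\"ahler manifold --- is the paper's strategy, and the preliminary step of discarding trivial summands is also correct (though note that for Coulomb branches trivial summands are simply discarded, $\mc{M}_C(G,N)=\mc{M}_C(G,N_0)$ by \cite[3(vii)]{BFNII}; they do not contribute symplectic vector space factors --- that is a Higgs-branch phenomenon). The gap is in the construction of the morphism $f$, which is the entire mathematical content of the step, and your proposed recipe --- saturate the cokernel of $B$ and compare presentations --- would not produce it. Consider $T=\Cs$ acting on $N=\C$ with weight $2$: then $\mc{M}_C(T,N)\cong\{xy=w^2\}\cong\C^2/(\Z/2)$, whereas saturating replaces the weight by $1$ and the resulting hyper-K\"ahler quotient is $\C^2$. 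The natural relationship is the degree-two quotient map $\C^2\to\C^2/(\Z/2)$, i.e.\ a finite cover of the Coulomb branch \emph{by} the saturated hypertoric variety; there is no birational Poisson morphism $\mc{M}_C(T,N)\to Y$ of the kind you assert, so the torsion of $\operatorname{coker}B$ is not merely ``what prevents $f$ from being an isomorphism'' --- it reverses the direction of the available map. To exploit that covering you would instead need a descent statement in the spirit of Proposition~\ref{lem:keydominantsympconverse} or \cite[Proposition~2.4]{Beauville}, together with an actual proof that the Coulomb branch is such a quotient, neither of which appears in your write-up. Worse, if the $T^k$-action on $N_0$ is not faithful (which can happen even after all trivial summands are removed, e.g.\ two coordinates both of weight $(1,0)$ under $T^2$), then $B$ is not injective and no short exact sequence of the shape \eqref{eq:exactintegerseq} exists at all, saturated or not.

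The paper's resolution sidesteps any bespoke comparison of presentations. One \emph{enlarges the matter}: set $N'=N\oplus\C^k$ with $T^k$ acting on $\C^k$ through the identity weight matrix. The new weight matrix $\left(\begin{smallmatrix} B_0 \\ \mr{Id}\end{smallmatrix}\right)$ is then automatically injective with free cokernel, so \cite[4(iv)]{BFNII} identifies $\mc{M}_C(T^k,N')$ \emph{on the nose} with an affine toric hyper-K\"ahler manifold, and \cite[4(vi)]{BFNII} supplies, ready-made, a birational Poisson morphism $\mc{M}_C(T^k,N)\to\mc{M}_C(T^k,N')$ in exactly the direction required by Lemma~\ref{lem:keydominantsymp}. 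This is the missing idea: rather than modifying the lattice data to force exactness of \eqref{eq:exactintegerseq} (which changes the Coulomb branch in an uncontrolled way), one modifies the representation, and the needed generically \'etale Poisson morphism is already available in \cite{BFNII}.
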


\begin{proof}
	The action of $G = T^k$ on $N = \C^m$ is encoded in an integral $k \times m$ matrix $B_0$. Namely, 
	$$
	(t_1, \ds, t_k) \cdot x_i = t_1^{b_{1,i}} \cdots t_d^{b_{k,i}} x_i.
	$$  
	If we decompose $N = N_0 \oplus N^{G}$, then \cite[3(vii)]{BFNII} says that $\mc{M}_C(G,N) = \mc{M}_C(G,N_0)$. Therefore we may assume that $N = N_0$. In other words, no row of $B_0$ is zero; this will be important later. 
	
	The idea of course is to identify the Coulomb branch with a toric hyper-K\"ahler manifold and apply Proposition~\ref{prop:torichyperissymplectic}. However, this identification only holds if there is sufficient matter in the theory, specifically if the representation $N$ is assumed to be a faithful $T$-module. 
	
	Let $N' = N \oplus \C^k = \C^n$, where $T^k$ acts on $\C^k$ in the natural way (so that the weights are encoded by the identity matrix) and $n = m + k$. By \cite[4(vi)]{BFNII}, there is a birational Poisson morphism $\mc{M}_C(T^k,N) \to \mc{M}_C(T^k,N')$. Then Lemma~\ref{lem:keydominantsymp} says that $\mc{M}_C(T^k,N)$ will have symplectic singularities if we can show that $\mc{M}_C(T^k,N')$ has symplectic singularities. The action of $T^k$ on $N'$ is encoded in the matrix $B = \left(\begin{array}{c}
	B_0 \\
	\mr{Id} 
	\end{array} \right)$ and we may form a short exact sequence 
	$$
	0 \to \Z^k \stackrel{B}{\longrightarrow} \Z^n  \stackrel{A}{\longrightarrow} \Z^d \to 0, 
	$$
	where $A = (\mr{Id} | -\!B_0^T)$. We note that no row of $B$ is zero. In this situation, it is noted in \cite[4(iv)]{BFNII} that $\mc{M}_C(T, N')$ is isomorphic to the affine toric hyper-K\"ahler manifold $\mc{M}_H((T^d)^{\vee},N')$. By Proposition~\ref{prop:torichyperissymplectic}, the latter has symplectic singularities.
\end{proof}

Now we return to the general situation, where $G$ is a connected reductive group. Let $T$ be a maximal torus of $G$ and $W$ the associated Weyl group. It is shown in \cite[Lemma~5.9, Lemma~5.10]{BFNII} that there exists a birational Poisson morphism $\mc{M}_C(G,N) \to \mc{M}_C(T, N |_T)/W$. Lemma~\ref{lem:abeilanCoulomb} implies that $\mc{M}_C(T, N |_T)$ has symplectic singularities. It follows from \cite[Proposition~2.4]{Beauville} that $\mc{M}_C(T, N |_T)/W$ also has symplectic singularities. Therefore Theorem~\ref{thm:rankonemain} is a consequence of Lemma~\ref{lem:keydominantsymp}. 

\section*{Acknowledgements}

We would like to thank Dinakar Muthiah, Hiraku Nakajima, Oded Yacobi and Alex Weekes for stimulating discussions about Coulomb branches. We also thank the referee for comments that improved the article. The author was partially supported by a Research Project Grant from the Leverhulme Trust and by the EPSRC grant EP-W013053-1. 

On behalf of all authors, the corresponding author states that there is no conflict of interest.

\def\cprime{$'$} \def\cprime{$'$} \def\cprime{$'$} \def\cprime{$'$}
\def\cprime{$'$} \def\cprime{$'$} \def\cprime{$'$} \def\cprime{$'$}
\def\cprime{$'$} \def\cprime{$'$} \def\cprime{$'$} \def\cprime{$'$}
\def\cprime{$'$} \def\cprime{$'$}


\end{document}